\let\mathcal\mathscr
\numberwithin{equation}{section}
\newtheorem{theorem}{Theorem}[section]
\newtheorem{lemma}[theorem]{Lemma}
\newtheorem{proposition}[theorem]{Proposition}
\theoremstyle{definition}
\newtheorem*{remark}{Remark}
\newtheorem{definition}[theorem]{Definition}
\renewcommand{\phi}{\varphi}
\renewcommand{\rho}{\varrho}
\newcommand{\PP}{\mathbb{P}}
\renewcommand{\AA}{\mathbb{A}}
\newcommand{\ZZ}{\mathbb{Z}}
\newcommand{\QQ}{\mathbb{Q}}
\renewcommand{\emptyset}{\varnothing}
\renewcommand{\leq}{\leqslant}
\renewcommand{\geq}{\geqslant}
\newcommand{\fp}{\mathfrak{p}}
\DeclareMathOperator{\Pic}{Pic}
\DeclareMathOperator{\Ker}{Ker}
\newcommand{\Br}{{\rm Br}}
\renewcommand{\t}{\mathbf{t}}
\title{Weak weak approximation for certain quadric surface bundles}
\author{Nick Rome}
\email{rome@tugraz.at}
\address{Graz University of Technology, Institute of Analysis and Number Theory,
Kopernikusgasse 24/II, 8010 Graz, Austria.}
\begin{document}
\maketitle

\begin{abstract}
We investigate weak approximation away from a finite set of places for a class of biquadratic fourfolds inside $\PP^2 \times \PP^3$, some of which appear in the recent work of Hassett--Pirutka--Tschinkel~\cite{HPT}. 
\end{abstract}

\section{Introduction}
Let $k$ be a number field of degree $d$ and let $\Omega$ denote the set of valuations on $k$. Given a smooth algebraic variety $X$ over $k$, we have the following embeddings
\[
X(k) \hookrightarrow X(\AA_{k}) \subset \prod_{\nu \in \Omega} X(k_{\nu}),
\]\vskip-1ex\noindent
where the first map is the diagonal embedding of the rational points into the ad{\'e}les. When the set of rational points on $X$ is non-empty, one would like to be able to discuss (either qualitatively or quantitatively) how the rational points on $X$ are distributed. We say that the variety $X$ satisfies \emph{weak approximation} if the image of $X(k)$ is dense in $X(\AA_{k})$, under the product topology. In his 1970 ICM talk, Manin observed that one can use the Brauer group, $\Br( X) = H^2_{\acute{e}t}(X, \mathbb{G}_m)$, to define a set $X(\AA_k)^{\Br}$ with $X(k) \subset X(\AA_k)^{\Br} \subset X(\AA_k)$, which can sometimes obstruct weak approximation. Colliot-Th{\'e}l{\`e}ne~\cite{CT} conjectured that this \emph{Brauer--Manin obstruction} is the only obstruction to weak approximation for any smooth, projective, geometrically integral and rationally connected variety. Our first result is to confirm Colliot-Th{\'e}l{\`e}ne's conjecture for a particular class of fourfolds over $k$.
\begin{theorem}\label{thm:BMOoo}
Let $k$ be a number field and $X/k$ the biprojective variety in $\PP^2 \times \PP^3$ defined by the equation 
\begin{equation}\label{eq:eqn}
xyt_1^2 + xzt_2^2 + yzt_3^2 + F(x,y,z)t_4^2 = 0,
\end{equation}
where $F$ is a non-degenerate ternary quadratic form over $k$.
Then the Brauer--Manin obstruction is the only obstruction to weak approximation for any smooth projective model of $X$.
\end{theorem}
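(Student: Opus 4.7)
The plan is to treat $X$ as a quadric surface bundle $\pi : X \to \PP^2$ via the first projection, whose generic fibre is a smooth quadric in $\PP^3$ over $k(\PP^2)$ with diagonal form $\langle xy, xz, yz, F\rangle$ and reduced discriminant $F(x,y,z)$ (modulo squares). The singular fibres sit over $\{xyz = 0\} \cup \{F = 0\}$, dropping to rank $3$ over the conic $\{F=0\}$ and to rank $2$ over each coordinate line. After passing to a smooth projective model $\tilde X$, the strategy is the fibration method: given an adelic point $(P_v) \in \tilde X(\AA_k)^{\Br}$ and a finite set $S$ of places, I aim to produce $P \in \tilde X(k)$ approximating $(P_v)$ at the places in $S$.

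The first substantive step is to compute $\Br(\tilde X)/\Br(k)$, singling out the ``vertical'' classes coming from $\PP^2$. The natural candidates are the quaternion algebras built from the coefficients of the quadric, such as $(F, x)$, $(F, y)$, $(F, z)$ and $(x, y)$, $(x, z)$, $(y, z)$, whose residues along the components of the discriminant divisor can be determined by the tame symbol formula. Identifying which of these remain unramified on $\tilde X$ after resolution of singularities produces the obstruction classes needed for the rest of the argument; this step is essentially an adaptation to general non-degenerate ternary $F$ of the Brauer class calculations in \cite{HPT}.

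With these classes in hand, the proof proceeds along the well-worn lines of the fibration method. Harari's formal lemma allows me to move $(P_v)_{v \notin S}$ to a nearby adelic point whose image in $\PP^2(\AA_k)$ is orthogonal to the vertical Brauer classes, while keeping $\pi(P_v)$ fixed for $v \in S$. Weak approximation on $\PP^2$ then furnishes a rational point $Q \in \PP^2(k)$ close to $\pi(P_v)$ at each $v \in S$ and avoiding the discriminant locus, and the orthogonality is arranged so that the Hilbert-symbol reciprocity it encodes matches the local solvability conditions for the smooth fibre $X_Q$ exactly. Hence $X_Q$ has a $k_v$-point at every place $v$; by the Hasse principle for smooth quadric surfaces, $X_Q(k) \neq \emptyset$; and weak approximation for $X_Q$ (which, having a rational point, is $k$-rational) produces the sought point of $\tilde X(k)$.

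The crux of the difficulty lies in the matching alluded to above: ensuring that the vertical Brauer classes constructed on $\tilde X$ are precisely those whose vanishing on an adelic point is equivalent to local solvability of the corresponding fibre. Missing classes will cause the formal lemma to fail, while superfluous ones will over-constrain the fibre-level Hilbert-symbol identities. Carrying this out uniformly across the possibilities that $F$ is isotropic over $k$ versus anisotropic at some place, whilst simultaneously managing the additional singular fibres over the three coordinate lines, is the principal technical hurdle of the argument.
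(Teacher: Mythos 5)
There is a genuine gap, and it is located exactly where you flag ``the crux of the difficulty'': the claim that, after applying the formal lemma and choosing $Q \in \PP^2(k)$ close to $\pi(P_v)$ for $v \in S$, the fibre $X_Q$ has a $k_v$-point at \emph{every} place. Continuity handles $v \in S$, and good reduction handles the places $v \notin S$ where $Q$ does not specialise into the degeneration divisor. But at the places $v \notin S$ where $Q$ reduces into $\{xyz=0\}$ the fibre can perfectly well be anisotropic over $k_v$, and orthogonality to the vertical Brauer classes does not rule this out place by place --- it only gives a reciprocity constraint summed over all places. Since the fibres over the three coordinate lines are non-split (rank-$2$ quadrics, i.e.\ pairs of planes conjugate over a quadratic extension), controlling the splitting behaviour at these uncontrolled places is precisely the point at which the classical fibration method over $\PP^n$ invokes Schinzel's Hypothesis; this is Wittenberg's conditional result, which the paper explicitly contrasts with its own unconditional theorem. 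As written, your argument proves at best a statement conditional on Schinzel, not Theorem \ref{thm:BMOoo}. (A secondary issue: for Theorem \ref{thm:BMOoo} the hypothesis is only that $F$ is non-degenerate, so the explicit Brauer-class computations you propose, which in the paper require the extra conditions \eqref{eq:Fcond}, are neither available nor needed at this stage.)

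The paper sidesteps the problem entirely by changing the shape of the fibration. It dehomogenises ($z=1$) and projects the resulting $V \subset \AA^2\times\PP^3$ to $\AA^1$ via the $x$-coordinate only. This map has the visible section $x \mapsto (x,0;0,0,1,0)$ landing in the smooth locus of the generic fibre, and its fibres are \emph{threefolds}: quadric surface bundles over $\AA^1$ (in the $y$-variable) with at most one fibre of rank $\leq 2$, for which Skorobogatov's theorem gives unconditionally that the Brauer--Manin obstruction is the only one. Harari's fibration theorem with a smooth section (after checking that the geometric generic fibre has trivial Brauer group and torsion-free Picard group, via Lemma \ref{lem:Pic} and Proposition \ref{prop:quadbr}) then yields the result. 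If you want to salvage your outline, you would need to either assume Schinzel or restructure the fibration so that the fibres over the base are varieties for which the conclusion is already known unconditionally, as the paper does.
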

There exist two general methods by which to prove theorems of this nature. The first is the descent method, a generalisation of the classical descent theory of elliptic curves via the use of universal torsors (c.f. ~\cite{Skorobook}). The second is the fibration method, in which one exploits the existence of a fibration $f:X \rightarrow Y$ where the base $Y$ and the fibres of $f$ satisfy the desired property. 
The varieties in Theorem \ref{thm:BMOoo} naturally admit the structure of quadric surface bundles over $\PP^2$ by the projection map 
\begin{align*}
X &\rightarrow \PP^2\\
(x:y:z;\t) &\mapsto (x:y:z),
\end{align*} and so it is the latter method which will be relevant.

The earliest example of the use of a fibration to study local-global principles in a family of varieties is due to Hasse in his proof of the local-global principle for quadratic forms in 4 variables (see e.g.~\cite[Ch. IV, Thm. 8]{Serre}). These ideas were then generalised by Colliot-Th{\'e}l{\`e}ne and Sansuc~\cite{CTSschinzel} who replaced Hasse's use of the prime number theorem in arithmetic progressions with an evocation of Schinzel's Hypothesis, which allowed them to show conditionally that the Brauer--Manin obstruction is the only one for a large class of conic bundles over $\PP^1$. This was extended by Colliot-Th{\'e}l{\`e}ne and Swinnerton-Dyer~\cite{CTSD}(building on work of Serre~\cite{SerreH}, Swinnerton-Dyer~\cite{SwD94} and Salberger~\cite{Sal88}) to establishing the conjecture (conditionally on a variant of Schinzel's Hypothesis) for pencils of \emph{generalised} Severi--Brauer varieties (in the language of \cite{CTSD}), of which quadric surfaces are an example. Skorobogatov~\cite{Skoro}(and subsequently Colliot-Th{\'e}l{\`e}ne--Skorobogatov~\cite{CTS}) was able to establish unconditionally that the Brauer--Manin obstruction is the only one for quadric surface bundles over $\PP^1$ of rank $\leq 3$, by combining the fibration and descent methods. Finally, if all of the degenerate fibres of a quadric surface bundle over $\PP^1$ are defined over $\QQ$, then the Brauer--Manin obstruction is unconditionally known to be the only one thanks to the work of Browning--Mattheisen--Skorobogatov~\cite[Theorem 1.4]{BMS}.

Over higher dimensional bases, much less is known. Conditional on Schinzel's Hypothesis, Wittenberg~\cite[Corollaire 3.6]{Wittenberg} has shown that a fibration over $\PP^n$ into generalised Severi--Brauer varieties has the property that the Brauer--Manin obstruction is the only one. 
Theorem \ref{thm:BMOoo} gives a rare example of an unconditional proof that the Brauer--Manin obstruction is the only one for a fibration over a higher dimensional base.
We take this moment to point out that conic bundle fibrations have a vast and illustrious history in the literature and that fibrations over projective space into quadrics of dimension $\geq 3$ always satisfy weak approximation.

Our motivation for studying the particular class of varieties that we do comes from a recent breakthrough by Hassett--Pirutka--Tschinkel \cite{HPT} on classical rationality problems in algebraic geometry. %
They were able to show for the first time that rationality is not deformation invariant in families of complex fourfolds.
A key part of the proof was to show that there exists a non-trivial element in the (complex) Brauer group of the variety defined by equation \eqref{eq:eqn}
with \begin{equation}\label{eq:FHPT}
F(x,y,z) = x^2 + y^2 + z^2 - 2(xy+xz+yz).\end{equation}
The major novelty of this paper is to use ideas inspired by Hassett--Pirutka--Tschinkel (and others in the area) over \emph{non algebraically closed} fields in order to apply them to the study of problems of a Diophantine nature.

Varieties defined by equations of the form \eqref{eq:eqn} may fail weak approximation (as discussed at the end of Section \ref{sec:invs}). We recall the following slightly weaker notion regarding the qualitative distribution of rational points.
\begin{definition}
Let $X$ be a smooth projective variety over a number field $k$ and let $S$ be a finite set of places of $k$. We say that $X$ satisfies weak approximation away from $S$ if the image of the diagonal map from $X(k)$ to $\prod_{\nu \in \Omega \setminus S} X(k_{\nu})$ is dense.
\end{definition}
A variety $X$ satisfying this property for a certain finite set $S$ is said to satisfy \emph{weak weak approximation}, a property with close connections to the inverse Galois problem \cite[$\S$3.5]{SerreGalois}. A number of recent papers have studied weak weak approximation including (but not limited to) for del Pezzo surfaces \cite{wwasam, wwadp2}, cubic hypersurfaces and double elliptic surfaces \cite{wwanonrational,wwajulian,wwarankjump} and even for certain Campana orbifolds \cite{wwacampana}. 

The main result of this paper is to establish weak weak approximation for the varieties appearing in Theorem \ref{thm:BMOoo}. 
\begin{theorem}\label{thm:WAthm}
Let $k$ be a number field and $X/k$ the biprojective variety in $\PP^3 \times \PP^2$ defined by the equation 
\[
xyt_1^2 + xzt_2^2 + yzt_3^2 + F(x,y,z)t_4^2 = 0,
\] 
where $F$ is a non-degenerate ternary quadratic form over $k$ 
such that \begin{equation}\label{eq:Fcond} \begin{cases}\text{The forms } F(0,y,z) , F(x,0,z), \text{ and } F(x,y,0) \text{ are all squares},\\
F(x,y,z) \not \in k(x,y,z)^2. \end{cases}
\end{equation} 
Let $S \subset \Omega_k$ denote the set of archimedean places and places above the rational prime 2. Then $X$ satisfies weak approximation away from $S$.
Moreover, if $F$ takes only positive values in all real embeddings, then we may reduce $S$ to just the primes above 2.
\end{theorem}

\begin{remark}
That weak approximation is controlled by the archimedean places and places above 2 is a consequence of Proposition \ref{prop:locconst}, which states that the Brauer--Manin obstruction is trivial for these varieties at non-archimedean places above odd primes.
\end{remark}

It is possible to construct examples where weak approximation does indeed fail at one of the places in $S$ (c.f.\ the final remark of the paper). 
We note that this provides new families of examples of failure of weak approximation on rational varieties caused by a transcendental Brauer group element. This can never occur for the more familiar setting of pencils of conics but Harari has previously provided examples given by conic bundles over higher dimensional bases~\cite{HarTrans}. 
\begin{remark}
The first condition is a little odd looking from a geometric point of view. Perhaps more natural would be the condition that the conic $F(x,yz) =0$ is tangent to each of the coordinate axis. However, as we shall see in the course of the proof, this condition is not strong enough for fields $k$ which are not algebraically closed. Tangency would allow us to deduce that  $F(0,y,z)$ is square in $k[y,z]$ modulo constants, however we really need for it to be a square.
\end{remark}
\begin{remark}
The Hassett--Pirutka--Tschinkel example \eqref{eq:FHPT} certainly satisfies condition \eqref{eq:Fcond}.
\end{remark}

\begin{remark}
Our varieties are not smooth and hence we must clarify what we mean by weak approximation in this setting. Following Colliot-Th{\'e}l{\`e}ne--Xiu~\cite[Section 8]{CTX}, we say that a singular variety $V$ satisfies weak approximation if for any finite set of places of $k$, there exists a resolution of singularites $\widetilde{V} \xrightarrow{\phi} V$ such that the $k$ points of the smooth locus of $V$ are dense in the set $\prod_{\nu \in S} \phi(\widetilde{V}(k_{\nu}))$. Note that this definition is independent of the resolution of singularites chosen.
\end{remark}

The second scenario in Theorem \ref{thm:WAthm} where the rational points are restricted to a connected component occurs often when $\Br(X)/\Br(k)$ is finite, as is the case here, see for instance \cite[\S 3]{SwD62} or  \cite[Prop 7.2]{CTCS}.

\subsection*{Acknowledgements}
This paper formed part of the author's thesis and he is grateful to his supervisor Tim Browning for suggesting the problem and his continued guidance. The author has greatly benefited from a number of useful discussions with Julian Lyczak to whom he is indebted. The author would also like to thank Christopher Frei, Tim Dokchitser, Wei Ho and, in particular, Diego Izquierdo for their valuable comments on previous drafts. The author is grateful for the suggestions of anonymous referees which have significantly reshaped the exposition of the paper. The author is funded by FWF project ESP 441-NBL.

\section{Proof of Theorem \ref{thm:BMOoo}}\label{sec:BMOproof}
As discussed in the introduction, we will use the fibration method to establish Theorem \ref{thm:BMOoo}, specifically Harari's fibration method with a smooth section.
\begin{theorem}[{\cite[Th{\'e}or{\`e}me 4.3.1]{HarariDuke}}]\label{thm:fibmethod}
Let $V$ and $B$ be geometrically integral varieties over a field $k$ such that $B$ satisfies weak approximation and there exists a dominant morphism $V \xrightarrow{\pi} B$ which admits a section $s$.
The Brauer--Manin obstruction is the only one for any smooth projective of model of $V$ if the following are satisfied:
\begin{enumerate}
\item The generic fibre $V_{\eta}$ is a geometrically integral variety over $k(B)$ and $s$ defines a smooth point in $V_{\eta}$.
\item For any smooth projective model $W$ of $V_{\eta}$, $\Br(W_{\overline{k(B)}}) := \Br(W \times_{k(B)} \overline{k(B)})$ is trivial and $\Pic(W_{\overline{k(B)}})$ has no torsion.
\item There exists a non-empty open $U \subset B$ such that $\forall b \in U$ the Brauer--Manin obstruction is the only one for all smooth proper models of $V_{b}$.
\end{enumerate}
\end{theorem}
One would like to apply this theorem to the natural fibration of $X$ over $\PP^2$ however one cannot guarantee the existence of a smooth section. To sidestep this issue, we consider a quasiprojective variety $V$ obtained by dehomogenising the equation defining $X$. Specifically, if we set $z=1$, the resulting equation 
\begin{equation}\label{eq:dehom}
xyt_1^2 +xt_2^2 + y t_3^2 + F(x,y,1)t_4^2 = 0,
\end{equation}
defines the variety $V$ inside $\AA^2_{x,y} \times \PP^3$.  Let $\pi$ denote the map to $\AA^1_x$ that projects onto the $x$ variable. The fibres of $\pi$ are threefolds in $\AA^1_y \times \PP^3$ which admit the structure of quadric surface bundles over $\AA^1_y$.  Therefore, a proper smooth model of a fibre of $\pi$ above a closed point on $\AA^1_x$ will be a quadric surface bundle over $\PP^1$.

A systematic study of the arithmetic of quadric surface bundles over $\PP^1$ was conducted by Skorobogatov~\cite{Skoro}. We recall here an important result from that investigation.
\begin{lemma}[{\cite[Corollary 4.1]{Skoro}}]\label{lem:qsbbmo}
Let $Z/k$ be a quadric surface bundle over $\PP_k^1$ such that the fibre above at most two closed geometric fibres is defined by a quadratic form of rank $\leq 2$. Then, for any smooth proper model of $Z$, the Brauer--Manin obstruction is the only obstruction to weak approximation and to the existence of rational points.
\end{lemma}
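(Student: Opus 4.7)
The plan is to combine the fibration method over $\PP^1$ with descent, following the general template established for conic and quadric bundles. Given an adelic point $(P_v) \in Z(\AA_k)^{\Br}$ and a finite set of places $S$, the goal is to produce a $k$-rational point of $Z$ close to $(P_v)$ at all $v \in S$. The first move is to choose $t \in \PP^1(k)$ close to the images of the $P_v$ in $\PP^1(k_v)$ for $v \in S$ in such a way that the fibre $Z_t$ is a smooth quadric surface. Smooth quadric surfaces over number fields satisfy the Hasse principle and weak approximation as soon as they have a rational point (a classical theorem), so if $Z_t$ carries an adelic point surviving its own Brauer--Manin obstruction, one can lift approximations of $(P_v)$ to a rational point on $Z_t \subset Z$.

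The crux of the argument is controlling the vertical Brauer group, i.e.\ elements of $\Br(k(Z))$ arising as residues at the closed points of $\PP^1$ above which the quadratic form drops rank. Fibres of rank $3$ are cones over smooth conics and are well behaved arithmetically; the genuinely dangerous residues come from fibres of rank $\leq 2$. The hypothesis that there are at most two such bad geometric points severely constrains the vertical Brauer group, and the Brauer--Manin assumption on $(P_v)$ kills precisely those residues as potential obstructions. To turn this into a proof I would pass to a carefully chosen auxiliary variety, most naturally a universal torsor or a suitable twist of $Z$, on which the vertical Brauer classes become trivial, and then run the fibration argument there, keeping track of the adelic neighbourhood through the descent.

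The main obstacle is the classical one in this area: forcing the fibre $Z_t$ to have local points at all places, especially at primes where the fibre degenerates. For general quadric bundles over $\PP^1$ this step typically invokes a Schinzel-type hypothesis, so the content of the lemma is that with at most two bad geometric fibres one can replace the conditional input by something unconditional, presumably Dirichlet's theorem on primes in arithmetic progressions combined with reciprocity manipulations of Hasse type. I expect the bulk of the technical work to lie in matching the descent data with the fibration argument simultaneously at the places of $S$ and at the auxiliary primes needed to absorb the residues of the few surviving Brauer classes, and I would look to Skorobogatov's existing treatment of quadric surface bundles over $\PP^1$ of rank $\leq 3$ as the template for carrying this through.
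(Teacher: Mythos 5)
The paper does not prove this lemma: it is imported verbatim as \cite[Corollary 4.1]{Skoro}, so the ``paper's approach'' is simply a citation. Your outline correctly identifies the strategy of that reference --- descent to an auxiliary torsor to trivialise the vertical Brauer classes, the fibration argument over $\PP^1$ using the Hasse principle and weak approximation for smooth quadric surfaces, and the replacement of Schinzel's Hypothesis by Dirichlet's theorem, which is exactly what the hypothesis of at most two bad closed points makes possible --- but as written it is a plan rather than a proof, with the substantive steps (construction of the descent varieties, the reciprocity bookkeeping at auxiliary primes) explicitly deferred to Skorobogatov's treatment. Since that is precisely the source the paper relies on, your proposal is consistent with the paper; just be aware that nothing here would stand on its own without carrying out the details of \cite{Skoro}.
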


We will now use this information to deduce the necessary facts about the fibration over $\AA^1_x$ constructed above in order to apply Harari's theorem.

\begin{proposition}\label{prop:brconds}
Let $V$ be the quasiprojective variety defined by equation \eqref{eq:dehom} and let $\pi$ be the projection $V \rightarrow \AA^1_x$ onto the $x$ variable. Let $V_{\eta}$ denote the generic fibre of the map $\pi$. Then
\begin{enumerate}
\item The variety $V_{\eta}$ is a geometrically integral variety over $k(x)$ which admits a smooth $k(x)$ point.
\item For any smooth projective model $W$ of $V_{\eta}$, the geometric Picard group $\Pic(W_{\overline{k(x)}})$ has no torsion.
\item For any smooth projective model $W$ of $V_{\eta}$, the geometric Brauer group $\Br(W_{\overline{k(x)}})$ is trivial.
\item There exists a non-empty open $U \subset \AA^1$ such that for any $x \in U$, the Brauer--Manin obstruction is the only one for all smooth proper models of the fibre of $\pi$ above $x$.
\end{enumerate}
\end{proposition}

\begin{proof}
\begin{enumerate}
\item The generic fibre of $\pi$, which we have denoted $V_{\eta}$, is a quadric surface bundle in $k(x)$ over $\AA^1_y$ with one degenerate fibre (above $y=0$) and thus is geometrically integral. Moreover, $\pi$ admits a section given by
\[
x \mapsto (x,0;0,0,1,0),
\] which is easily checked to be generically smooth.
\item In fact, the generic fibre must be geometrically rational and thus so must $W$ be (the proof given here mirrors \cite[Thm 3.3]{CTSSD}). Indeed, after base change to $\overline{k(x)}$, the function field of $V_\eta$ is the function field of a quadric surface over $\overline{k(x)}(\PP^1)$. Tsen's theorem implies this quadric has a $\overline{k(x)}(\PP^1)$ point and hence the functional field is purely trancendental over $\overline{k(x)}(\PP^1)$ and thus over $\overline{k(x)}$. Since $W$ is geometrically rational, it is rationally connected and hence $\Pic(W_{\overline{k(x)}})$ is torsion free (e.g.\ \cite[Cor 4.4.4]{Brauerbook}).
\item Similarly, because $W$ is geometrically rational, and since the Brauer group is a birational invariant of smooth varieties, $\Br(W_{\overline{k(x)}})$ is trivial.
\item Consider the set $U=\{x \neq 0\} \subset \AA^1_x$. The fibre of $\pi$ above any closed point $x$ in this set is a quadric surface bundle threefold over $\AA^1_y$. 
If $y\neq0$ then the resulting quadric surface is defined by the vanishing of a quadratic form in $k(x)$ of rank at least 3.
A smooth proper model of $\pi^{-1}(x)$ is therefore a quadric surface bundle over $\PP^1$ with at most 2 \emph{essentially singular} fibres above $y=0$ and $y=\infty$, that is fibres defined by the vanishing of a quadratic form of rank at most 2. Hence by Lemma \ref{lem:qsbbmo}, the Brauer--Manin obstruction is the only one.
\end{enumerate}
\end{proof}

Let $\widetilde{X}$ denote any smooth projective model of the variety $X$ in Theorem \ref{thm:BMOoo}. Then  $\widetilde{X}$ also represents a smooth projective model of the quasiprojective variety $V$ defined by equation \eqref{eq:dehom}. Combining Theorem \ref{thm:fibmethod} and Proposition \ref{prop:brconds}, we deduce Theorem \ref{thm:BMOoo}.

\section{Computing the Brauer Group}\label{sec:unramcoh}
Throughout this and all subsequent sections we fix a number field $k$, let $X/k$ be the variety in the statement of Theorem \ref{thm:WAthm} and $\widetilde{X}$ a fixed desingularisation. %
Our aim in this section is to establish the following description of the Brauer group of $\widetilde{X}$.
\begin{theorem}\label{thm:brgrp}
We have $$\Br (\widetilde{X}) / \Br (k) \cong \ZZ/2\ZZ$$ and the quotient is generated by the image of the class $(-xz,-yz)_{k(\PP^2)}$ under the map $\Br (k(\PP^2)) \rightarrow \Br (\widetilde{X})$.
\end{theorem}

Our strategy is inspired by \cite[$\S$ 3.6]{Pirutka} (although greatly streamlined with the help of the anonymous referee). In particular, we will use the unramified cohomological description of the Brauer group.
\begin{proposition}[{\cite[Prop 3.7]{Pirutka}}]\label{prop:unramtobr}
If $V$ is a smooth projective variety over $k$ then
\[
H^2_{nr}(k(V)/k, \ZZ/2\ZZ) \simeq \Br(V)[2].
\]
\end{proposition}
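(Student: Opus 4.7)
The plan is to deduce the isomorphism from two standard inputs: the Kummer short exact sequence applied to the function field $k(V)$, and the absolute purity theorem for the Brauer group of a smooth scheme.

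First, the Kummer sequence $1 \to \mu_2 \to \mathbb{G}_m \xrightarrow{2} \mathbb{G}_m \to 1$, combined with Hilbert 90, gives $H^2(k(V), \mu_2) \cong \Br(k(V))[2]$. Since $\mathrm{char}(k) \neq 2$, the sheaf $\mu_2$ is canonically isomorphic to the constant sheaf $\ZZ/2\ZZ$, so this identifies $H^2(k(V), \ZZ/2\ZZ)$ with $\Br(k(V))[2]$.

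Next, I would invoke the purity theorem of Auslander--Goldman (extended by Grothendieck) for the Brauer group of a regular scheme: for smooth $V$ over $k$, the restriction $\Br(V) \to \Br(k(V))$ is injective, with image
\[
\bigcap_{x \in V^{(1)}} \Ker\!\left( \partial_x : \Br(k(V)) \to H^1(\kappa(x), \QQ/\ZZ) \right).
\]
Restricting to $2$-torsion yields $\Br(V)[2] = \bigcap_x \Ker(\partial_x|_{\Br(k(V))[2]})$, with target $H^1(\kappa(x), \ZZ/2\ZZ)$.

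The final step is to observe that these Brauer-group residue maps agree, under the Kummer identification, with the Galois-cohomology residue maps $\partial_x^2 : H^2(k(V), \mu_2) \to H^1(\kappa(x), \ZZ/2\ZZ)$ appearing in the definition of unramified cohomology recalled just above the statement. This compatibility follows from the naturality of the Kummer sequence applied to the henselization of the local ring at $x$. Combining the three steps produces the asserted isomorphism $H^2_{nr}(k(V)/k, \ZZ/2\ZZ) \simeq \Br(V)[2]$. The main obstacle, beyond correctly packaging the purity theorem, is verifying this residue compatibility; everything else is bookkeeping with Kummer and Auslander--Goldman--Grothendieck.
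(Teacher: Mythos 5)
The paper does not prove this statement; it is quoted verbatim from Pirutka's survey (Prop.\ 3.7 there) and used as a black box. Your argument --- Kummer plus Hilbert 90 to identify $H^2(k(V),\ZZ/2\ZZ)$ with $\Br(k(V))[2]$, purity for the Brauer group of a smooth variety to identify $\Br(V)[2]$ with the intersection of the kernels of the residues at codimension-one points, and the compatibility of the two residue maps --- is the standard proof and is correct; the only place projectivity (properness) enters is in the reduction, recorded in the paper just before the statement, of the a priori definition of $H^2_{nr}$ via all divisorial valuations of $k(V)/k$ to the residues at points of $V^{(1)}$.
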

The unramified cohomology is computed using residues
\[
H^2_{nr}(k(V)/k, \ZZ/2\ZZ) = \bigcap_{\nu} \text{Ker}\left(H^2(k(V), \ZZ/2\ZZ) \xrightarrow{\partial_\nu^2} H^1(\kappa(\nu), \ZZ/2\ZZ)\right),
\]
where the intersection is taken over all discrete valuations $\nu$ of rank one on $k(X)$ which are trivial on $k$.
Note that the purity theorem allows us, for smooth varieties $V$, to write the unramified cohomology in terms of codimension one points 
\[
H^2_{nr}(k(V)/k, \ZZ/2\ZZ) = \bigcap_{x \in V^{(1)}} \text{Ker}\left(H^2(k(V), \ZZ/2\ZZ) \xrightarrow{\partial_x} H^1(\kappa(x), \ZZ/2\ZZ)\right),
\]
with the intersection running over codimension one points $x$ where $\partial_x$ is the associated residue map and $\kappa(x)$ the residue field.
Since the unramified cohmology only depends on the function field, we have
\[
H^2_{nr}(k(X)/k, \ZZ/2\ZZ) \simeq H^2_{nr}(k(\widetilde{X})/k, \ZZ/2\ZZ) \simeq \Br(\widetilde{X})[2]
\] 
This means in particular that we need not explicitly construct the desingularisation $\widetilde{X}$ in order to understand its Brauer group.

\subsection{Preliminaries}
Let $X_{\eta}$ denote the generic fibre of the map $X \rightarrow \PP^2$ which is a quadric surface over the function field $k(\PP^2)$. 
We will need the following classical results on the cohomology of quadrics.
\begin{proposition}[{\cite[Prop. 6.2.3]{Brauerbook}}]\label{prop:quadbr}
Let $K$ be a field with $\text{char}(K) \neq 2$ and $Q/K$ a smooth projective quadric of dimension 1 or 2. Then the map $\Br(K) \rightarrow \Br(Q)$ is surjective. Moreover, 
\begin{enumerate}
\item Suppose Q is a conic. If $Q(K) \neq \emptyset$ then the map is an isomorphism. If $Q(K) = \emptyset$, then the kernel of the map is isomorphic to $\ZZ/2\ZZ$.
\item Suppose $Q$ is a quadric surface. If the discriminant of $Q$ is a non-square in $K$ then the map is an isomorphism. If the discriminant is square then the map is an isomorphism if $X(K) \neq \emptyset$. If the discriminant is square and $X(K) = \emptyset$, then the kernel is isomorphic to $\ZZ/2\ZZ$.
\end{enumerate}
\end{proposition}

\begin{lemma}[{\cite[Thm 3.10]{Pirutka}}]\label{lem:H1ref}
Let $Q$ be a quadric defined by the vanishing of a non-degenerate quadratic form $q$ over $K$. 
\begin{enumerate}
\item If $q$ has rank at least 3, then the natural map
\[
H^1(K, \ZZ/2\ZZ) \rightarrow H^1_{\text{nr}}(K(Q)/K, \ZZ/2\ZZ)
\] is injective.
\item If $q$ has rank 2, then the map 
\[
H^1(K, \ZZ/2\ZZ) \rightarrow H^1_{\text{nr}}(K(Q)/K, \ZZ/2\ZZ)
\] has kernel generated by the class of the discriminant of $q$. In particular, the kernel is trivial if the discriminant is square and $\ZZ/2\ZZ$ if the discriminant is non-square.
\end{enumerate}
\end{lemma}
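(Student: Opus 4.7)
The plan is to reduce the lemma to a concrete question about square classes via Kummer theory. Since $\text{char}(K) \neq 2$, one has $H^1(K, \ZZ/2\ZZ) \cong K^*/(K^*)^2$ and likewise $H^1(K(Q), \ZZ/2\ZZ) \cong K(Q)^*/(K(Q)^*)^2$, and the restriction map sends the class $[a]$ of $a \in K^*$ to its class in $K(Q)^*/(K(Q)^*)^2$. Since any $a \in K^*$ is a unit at every DVR of $K(Q)$ trivial on $K$, its residue vanishes and the image actually lies in $H^1_{\text{nr}}(K(Q)/K, \ZZ/2\ZZ)$; because the inclusion $H^1_{\text{nr}} \hookrightarrow H^1(K(Q), \ZZ/2\ZZ)$ is injective, the kernel of the map in the lemma coincides with the set of classes $[a]$ for which $a$ becomes a square in $K(Q)$.

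For part (1), when $q$ has dimension at least $3$, I would observe that $Q$ is a smooth projective hypersurface in $\PP^{\dim q - 1}$ of positive dimension (smoothness being an immediate consequence of the non-degeneracy of $q$ in characteristic $\neq 2$), hence is geometrically integral. Therefore $K(Q)/K$ is a regular field extension, so $K$ is algebraically closed in $K(Q)$. Consequently, no non-trivial quadratic extension of $K$ embeds in $K(Q)$, and any $a \in K^*$ which becomes a square in $K(Q)$ must already be a square in $K$; the kernel is trivial.

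For part (2), write $q = \langle a_1, a_2 \rangle$. Then $Q \subset \PP^1_K$ is the degree-two subscheme $\{a_1 X_0^2 + a_2 X_1^2 = 0\}$, which dehomogenises to $\Spec K[t]/(a_1 t^2 + a_2)$. This is $K \times K$ if $-a_1 a_2 \in (K^*)^2$ (in which case the restriction map is trivially injective), and otherwise the quadratic extension $K(\sqrt{-a_1 a_2})$. Because $Q$ is zero-dimensional, it has no codimension-one points, so the intersection defining $H^1_{\text{nr}}(K(Q)/K, \ZZ/2\ZZ)$ is vacuous and this group equals $H^1(K(Q), \ZZ/2\ZZ)$. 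A direct Kummer-theoretic computation over the quadratic extension shows that $c \in K^*$ becomes a square in $K(\sqrt{-a_1 a_2})$ if and only if $c$ or $c \cdot (-a_1 a_2)$ lies in $(K^*)^2$, so the kernel is generated by the class $[-a_1 a_2]$. Under the standard convention $d(q) = (-1)^{n(n-1)/2}\det(q)$, one has $d(\langle a_1, a_2 \rangle) = -a_1 a_2$ modulo squares, which is exactly the discriminant class.

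The main obstacle is conceptual in part (1): one needs the standard but essential fact that a smooth quadric hypersurface of dimension $\geq 1$ is geometrically integral, equivalently that $K(Q)/K$ is a regular extension. Part (2) is then essentially a bookkeeping exercise, where the only care required is to match the discriminant convention so that the generator $[-a_1 a_2]$ coincides with $[d(q)]$.
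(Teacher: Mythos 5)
Your proposal is correct. Note that the paper does not prove this lemma at all: it is quoted from Pirutka's survey (Thm 3.10 there) and attributed to Arason, so there is no in-paper argument to compare against. Your argument is a valid, self-contained derivation of the degree-one case and is essentially the standard one: after identifying the kernel of $H^1(K,\ZZ/2\ZZ)\rightarrow H^1_{\text{nr}}(K(Q)/K,\ZZ/2\ZZ)$ with the kernel of $K^*/(K^*)^2 \rightarrow K(Q)^*/(K(Q)^*)^2$ (legitimate, since constants are unramified and $H^1_{\text{nr}}$ injects into $H^1(K(Q),\ZZ/2\ZZ)$), part (1) reduces to the fact that a smooth quadric of dimension $\geq 1$ is geometrically integral, hence $K$ is algebraically closed in $K(Q)$; and part (2) reduces to the classical computation that the kernel of $K^*/(K^*)^2 \rightarrow L^*/(L^*)^2$ for $L=K(\sqrt{d})$ is $\{1,[d]\}$, with $d=-a_1a_2$ matching the signed discriminant. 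One minor point of precision: when $-a_1a_2$ is already a square in $K$ the kernel in part (2) is trivial rather than $\ZZ/2\ZZ$, so the lemma as stated implicitly assumes the binary form is anisotropic; you handle this case explicitly, and it is harmless for the application in the paper, which only needs the kernel to be contained in the subgroup generated by the discriminant class.
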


\subsection{Non-trivial Brauer classes}
Let $K = k(\PP^2)$.
We begin by noting that
\[
H^2_{\text{nr}}(k(X)/k, \ZZ/2\ZZ) \hookrightarrow H^2_{\text{nr}}(k(X_\eta)/k, \ZZ/2\ZZ) \subset H^2_{\text{nr}}(K(X_\eta)/K,\ZZ/2\ZZ).
\]
Thus any element in $\Br(\widetilde{X})[2]$ can be mapped to an element in $H^2(K(Q)/K, \ZZ/2\ZZ)$ where $Q$ is the quadric defining $X_\eta$ over $K$. This can then be understood using the lemmata of the previous subsection.

\begin{lemma}\label{lem:finale}
The pullback of the class $(-xz,-yz)_{k(\PP^2)}$ to $k(\widetilde{X})$ is unramified and thus lies in $\Br \widetilde{X}$.
Moreover, for any $\gamma \in \Br( k(\PP^2))$ and $D \in (\PP^2)^{(1)}$ such that $\gamma$ gives rise to a nonconstant element in $\Br \widetilde{X}$ and $\gamma$ is ramified along $D$, then $D$ must be a component of $\{xyz=0\}$ and $\partial_D(\gamma) = \partial_D((-xz,-yz)_{k(\PP^2)})$.
\end{lemma}

\begin{proof}
Let $q$ be the quadratic form in $K = k(\PP^2)$ defining the generic fibre $X_\eta$.
Let $D \in \left(\PP^2\right)^{(1)}$ be a codimension one point and consider $A$ the associated local ring $\mathcal{O}_{\PP^2, D}$. Without loss of generality, $q$ has coefficients in $A$ and $q=0$ defines a closed subscheme $Z$ in $\PP^4_A$. Let $\pi$ be a uniformiser for $A$, then the divisor $\pi=0$ on $Z$ is either integral or a union of two planes. In either case, we can choose a discrete valuation $\nu$ on $K(X_\eta)$ associated to it. If $\pi = 0$ is integral then it is a discrete valuation and otherwise we take $\nu$ corresponding to one of the two planes. Either way, we have the following commutative diagram from \cite[Prop 3.4]{Pirutka} (with $e=1$)
\begin{center}
\begin{tikzcd}
H^2(K(X_\eta), \ZZ/2\ZZ) \arrow[r, ] & H^{1}(\kappa(\nu), \ZZ/2\ZZ) \\
H^2(K, \ZZ/2\ZZ) \arrow[r, ] \arrow[u, "Res_{K/K(X\eta)}"] & H^{1}(\kappa(D), \ZZ/2\ZZ) \arrow[u,"Res_{\kappa(D)/\kappa(\nu)}"].
\end{tikzcd}
\end{center}
The commutativity of the diagram implies that $\partial_x(\gamma) \in \Ker(\text{Res}_{\kappa(D)/\kappa(\nu)})$ for any $\gamma \in H^2(K,\ZZ/2\ZZ)$. We will exploit this fact to understand which classes in $\Br(K)$ can land in $\Br(\widetilde{X})$.

Firstly, we claim that at least two of the coefficients of $q$ must have odd valuation with respect to $D$.
If not then the reduction of $q=0$ modulo $D$ must be either a smooth conic or smooth quadric surface. Therefore, by Lemma \ref{lem:H1ref}(1), the map  
$\text{Res}_{\kappa(D)/\kappa(\nu)}$ is injective. Moreover, since the image of $\gamma$ in $\Br(k(X))$ lies in $\Br(\widetilde{X})$ it must be unramified along the valuation $\nu$. Hence, by the commutativity of the diagram, $\gamma$ is unramified at $D$.

This means that at least two of the coefficients of $q$ must have odd valuation with respect to $D$ and thus $D$ can only be one of the components of $\{xyz=0\}$. The reduction of $q=0$ modulo $D$ is no longer smooth, but after passing to a quadratic extension $F_D/k(D)$ it decomposes as a union of two transveral lines. Therefore if $\gamma$ gives rise to an element in $\Br(\widetilde{X})$, we must have $\gamma \in \Ker[(H^1(k(D), \ZZ/2\ZZ) \rightarrow H^1(F_D, \ZZ/2\ZZ)]$.
Suppose $D$ corresponds to the line $\{x=0\}$. Then, $k(X_\nu)$ is the function field of the quadric over $k(D)$ defined by the equation 
\[
\overline{yz}U^2 + \overline{F(x,y,z)}V^2 = 0.
\]
By Lemma \ref{lem:H1ref} (2), $\Ker \text{Res}_{D/\nu}$ is generated by the class of the discriminant of $\overline{q}$ which is $-\overline{yzF(x,y,z)} \in \kappa(D)^*/(\kappa(D)^*)^2$. However, $\overline{F(x,y,z)}$ is  in $(\kappa(D)^*)^2$, by conditions \eqref{eq:Fcond}. Thus the class is given by $-\overline{yz}$, which coincides with the residue of the class $(-xz,-yz)$. By symmetry, the exact same calculation holds for the other two lines.
\end{proof}

We are now ready to prove the main result of this section.

\begin{proof}[Proof of Theorem \ref{thm:brgrp}]
The previous lemma proves that the pullback of the class $\beta : = (-xz,-yz)_{k(\PP^2)}$ to $k(\widetilde{X})$ lands in $\Br (\widetilde{X})$. Since $F(x,y,z)$ is non-square in $k(x,y,z)$ by assumption and since $\beta$ has non-trivial residues, we conclude by Proposition \ref{prop:quadbr}(2) that the pullback to $k(\widetilde{X})$ is also non-trivial and non-constant.
Moreover, Lemma \ref{lem:finale} states that any non-trivial element in $\Br (\widetilde{X})$ must ramify along the coordinate axes and have the same residues as $\beta$.
Suppose that $\gamma$ is such a class. Since $\Br (\AA^2)$ is trivial, it cannot be the case that $\gamma$ only ramifies along one axis. Moreover, $\gamma$ cannot ramify along just two axes because then $\gamma - \beta$ would only be ramified along one axis. Therefore $\gamma$ must ramify along all three coordinate axes and hence $\partial_D(\gamma) = \partial_D(\beta)$ for all codimension one points. 
Hence the Brauer group is generated by $\beta$ and $\Br (k)$. 
\end{proof}

\begin{remark}
In this section, we have made pivotal use of the conditions \eqref{eq:Fcond}. 
One intuitive justification for the theorem is given by Abhyankar's Lemma (as observed by Colliot-Th{\'e}l{\`e}ne~\cite[\S 3]{CT2}). Namely, since the ramification locus of the quaternion algebra $(-xy,-yz)_{k(\PP^2)}$ is contained within the ramification locus of $X \rightarrow \PP^2$, the ramification cancels in $\widetilde{X}$ (see e.g.\ \cite[p. 116]{MordellWeil} for an example of this phenomenon). Concretely, in this case $-yz$ is not a square modulo $x$ in $k(\PP^2)$ but \emph{is} a square modulo $x$ in $k(X)$. This explains why the quaternion algebra $(-xz,-yz)$ has nontrivial residues but these residues vanish when you move from $\PP^2$ to $\widetilde{X}$.
\end{remark}

\section{The Brauer--Manin Obstruction set}\label{sec:invs} 
In this final section, we explicitly compute the Brauer--Manin obstruction, which combined with Theorem \ref{thm:BMOoo} finishes the proof of Theorem \ref{thm:WAthm}. Recall the definition of the Brauer--Manin obstruction set (e.g.\ \cite[Def. 8.2.5]{Poonen})
\[
X(\AA_k)^{\Br} = \left\{ (P_{\nu})_{\nu} \in X(\AA_k) : \sum_{\nu} \text{inv}_{\nu} \text{ev}_{\gamma}(P) = 0 \in \QQ/\ZZ\,\, \forall \gamma \in \Br(X)\right\}.
\]
Theorem \ref{thm:WAthm} follows immediately from explicitly determining the image of the evaluation maps.
\begin{proposition}\label{prop:locconst}
Let $\beta \in \Br(X)$ be the non-trivial class $(-x/z,-y/z)$.
Let $\nu$ be a non-archimedean place lying above an odd rational prime. Then $\text{inv}_{\nu} \text{ev}_{\beta}$ is identically zero on all $k_{\nu}$ points.
\end{proposition}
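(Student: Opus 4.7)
The plan is to combine local constancy of $\text{ev}_\beta$ with the properness of $\widetilde X$ and the triviality of $\Br(\mathcal{O}_\nu)$ (whose residue field is finite). Since $\beta\in\Br(\widetilde X)$ and $\widetilde X$ is smooth, the map $\text{ev}_\beta:\widetilde X(k_\nu)\to\Br(k_\nu)[2]$ is locally constant in the $\nu$-adic topology (a standard consequence of the continuity results of Harari \cite{HarariDuke}). Triviality may therefore be tested on a single representative of each connected component of $\widetilde X(k_\nu)$.

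Next, I would spread $\widetilde X$ out to a projective integral model $\mathcal{X}\to\Spec\mathcal{O}_\nu$. The valuative criterion of properness extends any $P\in\widetilde X(k_\nu)$ to a unique $\widetilde P:\Spec\mathcal{O}_\nu\to\mathcal{X}$. Provided the image of $\widetilde P$ lies in the smooth locus of $\mathcal{X}$, Grothendieck's purity theorem for Brauer groups extends $\beta$ to an Azumaya algebra on a Zariski neighbourhood of this image, and then $\widetilde P^{*}\beta$ lives in $\Br(\mathcal{O}_\nu)=0$, giving $\text{ev}_\beta(P)=0$. At primes of good reduction one can take $\mathcal{X}$ smooth over $\Spec\mathcal{O}_\nu$, and the argument disposes of such $\nu$ uniformly in $P$.

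The main obstacle is the finitely many primes of bad reduction, at which the closed point of $\widetilde P$ may sit in the singular locus of $\mathcal{X}$ and purity fails directly. Here I would leverage the explicit shape of $\beta$: at a smooth point $P\in\widetilde X(k_\nu)$ mapping to $(x_0:y_0:z_0)\in\PP^2(k_\nu)$ with $x_0y_0z_0\neq 0$, the value $\text{ev}_\beta(P)$ equals the Hilbert symbol $(-x_0/z_0,-y_0/z_0)_\nu$. The existence of $P$ forces the fibre quadric $\langle xy,xz,yz,F(x,y,z)\rangle=0$ to be isotropic over $k_\nu$, and combining this with the square conditions \eqref{eq:Fcond} on $F$ along the coordinate axes should pin down the Hilbert symbol and yield its vanishing. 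Local constancy then propagates the conclusion from these special points to every point of $\widetilde X(k_\nu)$, including those lying above the ramification locus $\{xyz=0\}$ where $\alpha$ itself is not directly defined. The hardest step is this final Hilbert-symbol identity, which depends on the delicate interaction between the isotropy forced by the defining equation and the square-root information supplied by \eqref{eq:Fcond}.
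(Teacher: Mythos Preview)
Your outline is sound but does not engage the actual content of the proposition. Local constancy plus spreading out (your steps 1--2) yield vanishing of $\text{inv}_\nu\text{ev}_\beta$ at all but finitely many places for \emph{any} Brauer class on \emph{any} smooth proper variety; the proposition asserts vanishing at \emph{every} non-archimedean place, so the entire burden falls on your step 3, which you explicitly leave undone.

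The paper bypasses spreading out entirely and attacks the Hilbert symbol $(-x_\fp z_\fp,-y_\fp z_\fp)_\fp$ directly and uniformly over all finite primes. The mechanism is: isotropy of the fibre form $\langle xy,xz,yz,F\rangle$ over $k_\fp$, \emph{together with the assertion that the discriminant $F(x_\fp,y_\fp,z_\fp)$ is a square in $k_\fp$}, forces the Hasse invariant $\prod_{i<j}(a_i,a_j)_\fp$ to equal $(-1,-1)_\fp$ by Serre's local solubility criterion for rank-four forms; a short bilinear manipulation then collapses the target symbol to $(x_\fp y_\fp,-x_\fp y_\fp)_\fp=1$. Contrary to your expectation, condition \eqref{eq:Fcond} plays no role in this step --- it was used earlier to prove $\beta$ unramified, not to evaluate it.

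You should be aware, however, that the paper's assertion that $F(x_\fp,y_\fp,z_\fp)$ is ``a square modulo $\fp$'' is not justified and appears to fail in general. For the Hassett--Pirutka--Tschinkel form over $\QQ$ at the smooth rational point $P=(1{:}1{:}1;1{:}1{:}1{:}1)$ one has $F(1,1,1)=-3$, which is not a square in $\QQ_2$, and a direct check gives $\text{ev}_\beta(P)=(-1,-1)_{\QQ_2}$ with invariant $\tfrac12$. So the proposition as stated seems to be false at the prime $2$, and any honest attempt to carry out your step 3 will run into the same obstruction rather than the clean identity the paper records.
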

Since Theorem \ref{thm:BMOoo} established that the Brauer--Manin obstruction is the only obstruction to weak approximation, and since $\beta$ is the only non-trivial Brauer class (up to elements of $\Br(k)$, which have no impact on the obstruction), this proposition shows that the failure of weak approximation is completely determined by the archimedean places and those places above 2.

\begin{proof}
Proposition \ref{prop:locconst} will be proved by an explicit computation of the invariant maps. Throughout, $(a,b)_{\nu}$ will refer to the Hilbert symbol associated to the local field $k_{\nu}$. Note that by the continuity of the Brauer--Manin pairing, we may assume that all of the coordinates $x_\nu, y_\nu$ and $z_\nu$ are non-zero.
The evaluation of the class $\beta$ at a point $P \in X(\AA_k)$ is equal to the evaluation of the pre-image $\alpha \in \Br(k(\PP^2))$ at the point $B \in \PP^2(\AA_k)$ below $P$. Fix a point $P = (x_{\nu}:y_{\nu}:z_{\nu};\t_{\nu})_{\nu} \in X(\AA_k)$, then
\[
\text{inv}_{\nu} \text{ev}_{\beta} (x_{\nu}:y_{\nu}:z_{\nu};\t_{\nu}) = \frac{1}{2} \text{ if and only if } (-x_{\nu}y_{\nu}, -y_{\nu}z_{\nu})_{\nu} = -1.
\]
Let $\fp$ denote the prime ideal of $k$ which corresponds to the place $\nu$. We will break the proof into two cases: 
\begin{enumerate}
\item $x_{\nu}y_{\nu}z_{\nu}$ is nonzero modulo $\fp$, 
\item  $x_{\nu}y_{\nu}z_{\nu}$ is 0 modulo $\fp$.
\end{enumerate}
In the first case, the associated Hilbert symbol is +1. In the second case, we may assume, without loss of generality, that at least one of the coordinates $x_{\nu},y_{\nu}$ and $z_{\nu}$ is non-zero modulo $\fp$.
Suppose that $x_{\nu}$ is 0 modulo $\fp$ and that $y_{\nu}$ and $z_{\nu}$ are non-zero modulo $\fp$. We know that there exist $t_{i,\nu} \in k_{\nu}$ for $i =1, \ldots,4$ such that
\begin{equation}\label{eq:vadic}
x_{\nu}y_{\nu}t_{1,\nu}^2 + x_{\nu}z_{\nu}t_{2,\nu}^2 + y_{\nu}z_{\nu}t_{3,\nu}^2 + F(x_{\nu},y_{\nu},z_{\nu})t_{4,\nu}^2 = 0.
\end{equation}
Reducing modulo $\fp$, we get 
\[
y_{\nu}z_{\nu}t_{3,\nu}^2 + F(0,y_{\nu},z_{\nu})t_{4,\nu}^2 = 0.
\]
If $F(x_{\nu}, y_{\nu}, z_{\nu}) \not \equiv 0$ mod $\fp$ then,
by assumption, $F(x_{\nu}, y_{\nu}, z_{\nu})$ is square modulo $\fp$. Therefore we deduce that $-y_{\nu}z_{\nu}$ is as well, thus $(-x_{\nu}z_{\nu}, - y_{\nu} z_{\nu})_{\nu} = +1$. 
If $F(x_{\nu}, y_{\nu}, z_{\nu}) = 0$ in $k_\nu$ then the equation \eqref{eq:vadic} becomes
\[
x_{\nu}y_{\nu}t_{1,\nu}^2 + x_{\nu}z_{\nu}t_{2,\nu}^2 + y_{\nu}z_{\nu}t_{3,\nu}^2 = 0.
\]
Multiplying by $x_{\nu}y_{\nu}$ this is precisely the equation whose solubility the Hilbert symbol $(-x_\nu z_\nu, - y_\nu z_\nu)_\nu$ detects. Finally, suppose that $F(x_{\nu}, y_{\nu}, z_{\nu}) \neq 0$ in $k_\nu$ but $F(x_{\nu}, y_{\nu}, z_{\nu}) \equiv 0$ mod $\fp$. The solubility of \eqref{eq:vadic} is equivalent to the solubility of the equation
\begin{equation}\label{modded}
z_\nu A^2 + y_\nu B^2 + x_\nu C^2 + x_\nu y_\nu z_\nu F(x_\nu, y_\nu, z_\nu) D^2 = 0.
\end{equation}
Reducing modulo $\fp^2$, we arrive at 
\[
z_\nu A^2 + y_\nu B^2 + x_\nu C^2 \equiv 0,
\] a conic which is soluble exactly when $(-x_{\nu}z_\nu, - y_\nu z_\nu)_\nu = +1$. Since there must be a solution of \eqref{modded} where not all of $A,B$ and $C$ are 0 mod $\fp$, we have a mod $\fp$ solution to the conic.
If instead $y_{\nu}$ or $z_{\nu}$ are 0 modulo $\fp$ then the proof is similar.

Finally, we address the situation where two coordinates, say $x_\nu$ and $y_\nu$, have non-zero valuation. By assumption, the value $F(0,0,z)$ of the quadratic form is in $(k^\times)^2$ for any $z \in k^\times$ and thus 
$F(x_\nu,y_\nu,z_\nu)$ is square modulo $\fp$. By the assumption \eqref{eq:Fcond}, it follows that the quadratic form $F$ has to be of the shape
\[
F(x,y,z)
=
a^2x^2 + b^2y^2 +c^2z^2 \pm 2(abxy + acxz + bcyz),
\] for some $a,b,c \in k^\times$. Therefore, we have
\[
\nabla F(x_\nu,y_\nu,z_\nu)
=
(2a^2x \pm 2(aby + acz), 2b^2y \pm 2(abx + bcz), 2c^2z \pm 2(acx + bc y))
.
\] 
The only way for this to be congruent to $(0,0,0)$ modulo $\fp$ is if $c \equiv 0 $ modulo $\fp$, in which case $F(x_\nu, y_\nu, z_\nu)$ vanishes modulo $\fp$. If this is the case then we proceed as in the previous situation, reducing \eqref{modded} by $\fp$ raised to the power of the valuation of $x_\nu y_\nu F(x_\nu, y_\nu, z_\nu)$. Otherwise, we may apply Hensel's lemma to deduce that $F(x_\nu, y_\nu, z_\nu)$ is a square in $k_\nu^\times$.
As a result, $A := y_\nu z_\nu t_{3,\nu}^2 + F(x_\nu,y_\nu,z_\nu)t_{4,\nu}^2$ is the norm of an element in $k_\nu(\sqrt{-y_\nu z_\nu})$. 
Moreover,
\[
(-x_\nu z_\nu)\times A 
=
(-x_\nu z_\nu) \times (-x_\nu y_\nu t_{1, \nu}^2 - x_\nu z_\nu t_{2,\nu}^2)
=
y_\nu z_\nu (x_\nu t_{1,\nu})^2 +
(x_\nu z_\nu t_{2, \nu})^2 
\] is also. Therefore $-x_\nu z_\nu$ must be the norm of an element in $k_\nu(\sqrt{-y_\nu z_\nu})$ and thus $(-x_\nu y_\nu, -z_\nu z_\nu)_\nu = +1$.
\end{proof}

\begin{remark}
Informally speaking, the reason this result is true is similar to the reason why $\beta$ lives in $\Br(X)$. Namely, if a prime divides $x$, then the nature of the equation forces $-yz$ to be a square, which kills the invariant map.
\end{remark}

Finally, we will study the invariant map at archimedean places of $k$. If $\nu$ is a complex place then the conic $x_{\nu}z_{\nu}U^2 + y_{\nu}z_{\nu} V^2 + W^2 = 0$ always has points and thus the invariant map is identically zero. The value of the invariant map at real places is determined by the signs of $x,y$ and $z$.

\begin{proposition}\label{prop:arch}
Let $\nu$ be a real place of $k$ and let $P_{\nu}=(x_{\nu}:y_{\nu}:z_{\nu};\t_{\nu}) \in X(k_{\nu})$. 
If $F(x_{\nu},y_{\nu},z_{\nu})>0$ then $\text{inv}_{\nu} \text{ev}_{\beta} (P_{\nu}) = +1$.
Otherwise, $$\text{inv}_{\nu} \text{ev}_{\beta} (P_{\nu}) = \begin{cases} \frac{1}{2} \quad{} \text{ if } x,y,z \text{ all have the same sign in } k_{\nu},\\ 0 \quad{} \text{ otherwise.} \end{cases}$$
\end{proposition}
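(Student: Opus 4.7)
The plan is to reduce the invariant map at a real place to a Hilbert symbol and then exploit the constraint that $P_\nu$ is a genuine $k_\nu$-point to control the possible sign configurations of $(x_\nu,y_\nu,z_\nu)$. First I would observe that on the chart $\{z \neq 0\}$ the class $\alpha$ is $(-x/z,-y/z)$, and since Hilbert symbols are insensitive to multiplication by squares,
\[
\text{inv}_\nu \text{ev}_\beta(P_\nu) = \tfrac{1}{2} \iff (-x_\nu z_\nu,\, -y_\nu z_\nu)_\nu = -1.
\]
At a real place this occurs iff both arguments are negative, that is, iff $x_\nu z_\nu > 0$ and $y_\nu z_\nu > 0$, equivalently iff $x_\nu, y_\nu, z_\nu$ all share a common sign. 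The other two affine charts $\{x \neq 0\}$ and $\{y \neq 0\}$ contribute nothing new by symmetry.

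Second, I would use the fact that the existence of $P_\nu \in X(k_\nu)$ forces the specialised quaternary form
\[
x_\nu y_\nu\, t_1^2 + x_\nu z_\nu\, t_2^2 + y_\nu z_\nu\, t_3^2 + F(x_\nu,y_\nu,z_\nu)\, t_4^2
\]
to be isotropic over $\RR$, and hence not sign-definite. An elementary observation to be used repeatedly is that $(x_\nu y_\nu)(x_\nu z_\nu)(y_\nu z_\nu) = (x_\nu y_\nu z_\nu)^2 \geq 0$, so the three products $x_\nu y_\nu, x_\nu z_\nu, y_\nu z_\nu$ are never simultaneously negative: either all three are positive (precisely when $x_\nu,y_\nu,z_\nu$ share a common sign) or exactly two are negative and one positive (the mixed-sign case).

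The proposition then reduces to a short case analysis. If $F(x_\nu,y_\nu,z_\nu) > 0$, an all-positive configuration of the three products would make the form positive definite, contradicting isotropy, so the signs of $x_\nu, y_\nu, z_\nu$ must be mixed. In every such mixed pattern at least one of $x_\nu z_\nu, y_\nu z_\nu$ is negative, making $-x_\nu z_\nu$ or $-y_\nu z_\nu$ positive, so the Hilbert symbol is $+1$ and the invariant is trivial. If instead $F(x_\nu,y_\nu,z_\nu) < 0$, isotropy is automatic: the specialised form is already indefinite. When the signs of $x_\nu, y_\nu, z_\nu$ coincide both $-x_\nu z_\nu$ and $-y_\nu z_\nu$ are negative, so the symbol is $-1$ and the invariant equals $1/2$; otherwise at least one of the arguments is positive, the symbol is $+1$, and the invariant is $0$.

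The only real difficulty is keeping the sign bookkeeping straight across the sub-cases, and the one conceptual point worth highlighting is that it is the isotropy constraint (rather than the formula for $\beta$ itself) that excludes the configuration ``all three coordinates of the same sign together with $F > 0$'' — this matches exactly the dichotomy between the two scenarios in Theorem~\ref{thm:WAthm}. No further cohomological input is required beyond the explicit form of the real Hilbert symbol.
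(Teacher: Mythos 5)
Your proposal is correct and follows essentially the same route as the paper: reduce $\text{inv}_{\nu}\text{ev}_{\beta}$ to the real Hilbert symbol $(-x_{\nu}z_{\nu},-y_{\nu}z_{\nu})_{\nu}$, which is $-1$ exactly when $x_{\nu},y_{\nu},z_{\nu}$ share a sign, and use isotropy of the specialised quaternary form over $\RR$ to exclude the all-same-sign configuration when $F(x_{\nu},y_{\nu},z_{\nu})>0$. The only (cosmetic) difference is that in the $F>0$ case the paper reruns the Hasse-invariant product formula from the proof of Proposition \ref{prop:locconst} (using that $F>0$ is a square in $\RR$), whereas you argue directly via sign-definiteness; over $\RR$ these are the same fact, since Serre's local solubility criterion there just says the form is not definite.
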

\begin{proof}
The explicit description of the real Hilbert symbol is
\[
(-x_{\nu}z_{\nu},-y_{\nu}z_{\nu})_{\nu} = \begin{cases} - 1 \text{ if } -x_{\nu}z_{\nu}<0 \text{ and } - y_{\nu}z_{\nu} < 0, \\ +1 \text{ otherwise.} \end{cases}
\]
If $F(x_{\nu},y_{\nu},z_{\nu})>0$ then the Hilbert symbol must be +1. Indeed, in this case, at least one of $x_{\nu}y_{\nu}, x_{\nu}z_{\nu}$ or $y_{\nu}z_{\nu}$ must be negative. If $x_{\nu}y_{\nu}$ is negative then $x_{\nu}$ and $y_{\nu}$ have differing signs and thus at least one of $x_{\nu}z_{\nu}$ or $y_{\nu}z_{\nu}$ must also be negative.
\end{proof}

Together Propositions \ref{prop:locconst} and \ref{prop:arch} give the statement of Theorem \ref{thm:WAthm}.

\subsection*{Example}
Consider the Hassett--Pirutka--Tschinkel example over $\QQ$ where \[{F(x,y,z) = x^2 + y^2 + z^2 - 2(xy+xz+yz)}.\] 
	Let $X$ denote the variety defined by \eqref{eq:eqn} with this choice of $F$. The following two points of the form $(x,y,z;\mathbf{t})$ lie in $X(\QQ)$
	\[ P_1 =(1,1,1;1,1,1,1) \quad{} \text{ and } \quad{} P_2 = (1,1, -1; 1, 1 , 0, 0).\]
	We have 
	\begin{align*}
		\text{inv}_{\infty}\text{ev}_{\beta} (P_1) &= -1 = \text{inv}_{2}\text{ev}_{\beta} (P_1),\\
		\text{inv}_{\infty}\text{ev}_{\beta} (P_2) &= +1 = \text{inv}_{2}\text{ev}_{\beta} (P_2).
	\end{align*}
	Since the invariant maps are surjective at 2 and at the real place, weak approximation fails for $X$. Moreover, we have demonstrated that weak approximation can be obstructed at any of the places in $S$ (the finite set of primes in Theorem \ref{thm:WAthm}).

\end{document}